\newtheorem{theorem}{\rm\bf Theorem}[section]
\newtheorem{proposition}[theorem]{\rm\bf Proposition}
\newtheorem{lemma}[theorem]{\rm\bf Lemma}
\newtheorem*{theorem*}{Theorem}
\newtheorem*{theorem 1}{\rm\bf Proposition 1}
\newtheorem*{theorem 2}{\rm\bf Proposition 2}
\newtheorem*{conj*}{Conjecture}
\theoremstyle{definition}
\newtheorem{definition}[theorem]{\rm\bf Definition}
\theoremstyle{remark}
\newtheorem{remark}[theorem]{\rm\bf Remark}
\newtheorem{example}[theorem]{\rm\bf Example}
\def\half#1#2{\begin{matrix}\frac{#1}{#2}\end{matrix}}
\def\R#1{\mathbb{R}^{#1}}
\def\field{K}
 \DeclareMathOperator{\Span}{Span}
\DeclareMathOperator{\Idm}{Idm}
\DeclareMathOperator{\Nil}{Nil_2}
\DeclareMathOperator{\card}{card}
\DeclareMathOperator{\Affin}{Affin}
\DeclareMathOperator{\sgn}{sgn}
\begin{document}

%
%
%
%
%
%
%
%

\title[Idempotent geometry in generic algebras]{Idempotent geometry in generic algebras}

\author{Yakov Krasnov}
\address{Department of Mathematics, Bar-Ilan University, Ramat-Gan, 52900, Israel}
\email{krasnov@math.biu.ac.il}
\author{Vladimir G. Tkachev}
\address{Department of Mathematics, Link\"oping University, Link\"oping, 58183, Sweden}
\email{vladimir.tkatjev@liu.se}

\begin{abstract}
Using the syzygy method, established in our earlier paper, we characterize the combinatorial stratification of the variety of two-dimensional real generic algebras. We show that there exist exactly three different homotopic types of such algebras and relate this result to potential applications and known facts from qualitative theory of quadratic ODEs. The genericity condition is crucial. For example, the idempotent geometry in Clifford algebras or Jordan algebras of Clifford type is very different: such algebras always contain nontrivial submanifolds of idempotents.
\end{abstract}

\maketitle



\section{Introduction}

The aim of this paper is to further develop the geometrical approach to generic  commutative (maybe nonassociative) algebras initiated in \cite{KT2018a} and based on the topological index theory and singularity theory. More precisely, we are interested in the following question: How the geometry of idempotents in a generic algebra is determined by its algebraic structure, and vice versa. It is classically known that idempotents  play a distinguished role in associative  (matrix, Clifford) and nonassociative (Jordan, octonions) algebra structures \cite{Porteous}, \cite{McCrbook}; see also very recent results for the so-called axial algebras \cite{HallRS} and nonassociative algebras of cubic minimal cones \cite{NTVbook}, \cite{Tk18a}.

A key ingredient of our approach in \cite{KT2018a} is the Euler-Jacobi formula which gives an algebraic relation between the critical points of a polynomial map and their indices. Indeed,  there is a natural bijection between fixed points of the  squaring  map $\psi:x\to x^2$ in a nonassociative algebra $A$ and its idempotents. By  Bez\'out's theorem, the complexification $A_\mathbb{C}$ has either $2^{\dim A}$ distinct idempotents (counting properly multiplicities and nilpotents which can be thought of as idempotents at infinity), or there are infinitely many idempotents. In the first case, the algebra $A$ is called generic. Then the dichotomy can be essentially reformulated as follows: an algebra is generic if and only if $\half12$ is \textit{not} in the Peirce spectrum of the algebra. The Euler-Jacobi formula applied to the squaring map $\psi$  yields several obstructions both on the spectrum of the idempotents and also the idempotent configurations, the so-called syzygies. In particular, information about the spectrum of sufficiently many idempotents completely determines the rest part of the spectrum and also prescribes possible idempotent constellations.

In this paper, we focus on the vector syzygies, i.e. the obstruction on the possible `geometry' or configurations of idempotents. The main results are presented for two-dimensional real generic algebras where we are able to obtain a complete characterization. The reality assumption is needed, in particular, because we are interested in certain topological invariants. We also relate our results to  potential applications and known facts from qualitative theory of quadratic ODEs.

\section{Preliminaries}

We begin by recalling some standard concepts and definitions of nonassociative algebra. By $A$ we denote a finite dimensional commutative nonassociative algebra over a subfield $\field$ of the field of complex numbers.  The algebra multiplication is denoted by juxtaposition. Thus, $xy=yx$ for all $x,y\in A$ but $x(yz)$ and $(xy)z$ are not necessarily equal. By a slight abuse of terminology, an algebra always mean a commutative nonassociative algebra.

If not explicitly stated otherwise, we shall assume that $\field=\R{}$, the field of real numbers. By $A_\mathbb{C}$ we denote the complexification of $A$ obtained in an obvious way by extending of the ground field such that $\dim A=\dim_\mathbb{C} A_\mathbb{C}$.

An operator of multiplication by $y\in A$ is denoted by $L_y,$ i.e. $L_yx=yx$. An element $c$ is called idempotent if $c^2=c$ and 2-nilpotent if $c^2=0$. By $\Idm(A)=\{0\ne c\in A:c^2=c\}$ we denote the set of all \textit{nonzero}  idempotents of $A$ and the complete set of idempotents will be  denoted by
$$
\Idm_0(A)=\{0\}\cup \Idm(A).
$$
By $\Nil(A)$ we denote the set of $2$-nilpotents, i.e. the elements $x$ of $A$ such that $x^2=0$.

Any semisimple idempotent $0\ne c=c^2\in A$ induces the corresponding  Peirce decomposition:
$$
A=\bigoplus_{\lambda\in\sigma(c)}A_c(\lambda),
$$
where $cx=xc=\lambda x$ for any $x\in A_c(\lambda)$ and $\sigma(c)$ is the Peirce spectrum of $c$ (i.e. the spectrum of the operator $L_c$).
The Peirce spectrum $\sigma(A)=\{\lambda_1,\ldots, \lambda_s\}$ of the algebra $A$ is the set of all possible distinct eigenvalues $\lambda_i$ in $\sigma(c)$, when $c$ runs  all idempotents of $A$.

Given a subset $E\subset A$ we denote by $\Span(E)$ the subspace of $A$ spanned by $A$. By $\Affin(E)$ we denote the affine span  of $E$, i.e. the smallest affine subspace of $A$ that contains $E$, regarded as a set of \textit{points} in $A$. Equivalently,
$$
\Affin(E)=e+\Span(E-e),
$$
where $e\in E$ is an arbitrary element and $E-e=\{x-e:x\in E\}$.

\section{General facts on the geometry of idempotents}
Recall \cite{KT2018a} that an nonassociative algebra over $\field$ is called \textit{generic} if its complexification $A_{\mathbb{C}}$ contains exactly $2^n$ distinct idempotents.  Then the following characterization holds:

\begin{proposition}[Theorem~3.2 in \cite{KT2018a}]\label{pro:KT}
If $A$ is a commutative generic algebra  then $\half12\not\in\sigma(A)$. In the converse direction: if $\half12\not\in\sigma(A)$ and $A$ does not contain $2$-nilpotents then $A$ is generic.
\end{proposition}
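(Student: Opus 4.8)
The plan is to realise the idempotents as the zero locus of the polynomial self–map $F\colon A_\C\to A_\C$, $F(x)=x^2-x$, and to study this locus by combining a Jacobian computation with B\'ezout's theorem. Every component of $F$ is a quadratic polynomial (in coordinates, $F_i(x)=q_i(x)-x_i$ with $q_i$ a quadratic form), so its projective closure has exactly $2^{\dim A}$ zeros in $\mathbb P^{\dim A}(\C)$ counted with multiplicity, the finite zeros are precisely $\Idm_0(A_\C)$, and genericity of $A$ says exactly that this set attains the maximal size $2^{\dim A}$ --- i.e. $F$ has no zero at infinity and every finite zero of $F$ is simple. So the whole statement reduces to translating ``simple zero of $F$'' and ``zero of $F$ at infinity'' into algebraic language.

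First I would compute $DF_c$ at an idempotent $c$. Commutativity gives $F(c+th)-F(c)=t\,(2L_c-I)h+t^2h^2$, hence $DF_c=2L_c-I$; therefore $c$ is a simple zero of $F$ iff $\det(2L_c-I)\neq 0$ iff $\tfrac12\notin\sigma(L_c)$. In particular $DF_0=-I$, so the trivial idempotent is always a simple zero and never obstructs genericity. Next I would locate the zeros at infinity: the top-degree part of $F$ is the squaring map $x\mapsto x^2$, so homogenising shows that the points of $\overline{V(F)}\subset\mathbb P^{\dim A}$ on the hyperplane at infinity are exactly the projectivised nonzero $2$-nilpotents of $A_\C$. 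Hence $F$ has no zero at infinity iff $\Nil(A_\C)=\{0\}$; and when this holds $V(F)=\Idm_0(A_\C)$ is automatically finite, since a positive-dimensional component of $V(F)$ would have projective closure of dimension $\geq 1$, which must meet the hyperplane at infinity.

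Granting these two observations, the forward implication is immediate: if $A$ is generic, then $\Idm_0(A_\C)$ consists of $2^{\dim A}$ distinct points, so every idempotent $c$ is a simple zero of $F$, whence $\det(2L_c-I)\neq 0$ and $\tfrac12\notin\sigma(L_c)$; since $\sigma(A)$ is the union of the spectra $\sigma(L_c)$ this gives $\tfrac12\notin\sigma(A)$. For the converse, suppose $A$ contains no $2$-nilpotents and $\tfrac12\notin\sigma(A)$. The first hypothesis kills all zeros at infinity, so $V(F)=\Idm_0(A_\C)$ is finite and, by B\'ezout, the corresponding zero-dimensional scheme has length $2^{\dim A}$. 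The second hypothesis says $\det(2L_c-I)\neq 0$ at every idempotent, so every point of $V(F)$ is reduced; hence the $2^{\dim A}$ units of length sit on $2^{\dim A}$ distinct points, i.e. $\#\,\Idm_0(A_\C)=2^{\dim A}$ and $A$ is generic.

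The step requiring the most care is the passage to $A_\C$: both the absence of $2$-nilpotents and the condition $\tfrac12\notin\sigma(L_c)$ must hold over the complexification, since a real algebra with no real $2$-nilpotents may still acquire them over $\C$, and a genuinely complex idempotent could a priori be a degenerate zero of $F$. Thus in the converse one is really invoking ``$\Nil(A_\C)=\{0\}$'' and ``$\tfrac12\notin\sigma(L_c)$ for every $c\in\Idm(A_\C)$'', so $\sigma(A)$ and $\Nil(A)$ have to be understood as invariants of $A_\C$ (as is standard in this circle of ideas); with that convention the two implications are exactly as above, and the only nonlinear inputs are B\'ezout's theorem and the elementary fact that a positive-dimensional affine variety cannot avoid the hyperplane at infinity.
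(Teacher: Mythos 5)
Your argument is correct and is essentially the intended one: the paper does not prove this proposition but imports it from \cite{KT2018a}, and its introduction describes precisely your framework --- B\'ezout's theorem applied to $F(x)=x^2-x$, with nonzero $2$-nilpotents of $A_{\mathbb{C}}$ playing the role of idempotents at infinity and $DF_c=2L_c-I$ detecting whether an idempotent is a simple zero, so that $\half12\notin\sigma(L_c)$ is exactly nondegeneracy of $c$. The one point worth tightening is in the forward direction: genericity gives finiteness of the \emph{affine} zero locus but not a priori of the locus at infinity (the projectivized $2$-nilpotents could form a positive-dimensional set), so instead of asserting that the projective closure has exactly $2^{\dim A}$ points with multiplicity you should invoke the affine B\'ezout inequality --- the sum of local multiplicities over the isolated affine zeros is at most $2^{\dim A}$ --- which still forces each of the $2^{\dim A}$ distinct idempotents to be a simple zero.
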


Let us make some elementary observations. First, note that  \textit{any two nonzero idempotents are linearly independent}. Indeed, $c_2=\lambda c_1$ implies $c_2=\lambda^2c_1$, therefore, since $\lambda\ne0$ we have $\lambda=1.$ Furthermore, we have

\begin{proposition}\label{pro:line}
If two nonzero idempotents lie on a line $\ell\subset A$. Then either of the following cases happens:
\begin{itemize}
\item[(a)]
the whole line consists of idempotents: $\ell\subset \Idm(A)$, $\half12\in \sigma(c)$ for any $c\in \ell$, and for any two points $c_1,c_2\in\ell$ there holds $c_1-c_2\in \Nil(A)$;
\item[(b)]
there are only two idempotents on $\ell$.
\end{itemize}
\end{proposition}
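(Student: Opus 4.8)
The plan is to parametrize the line and turn the idempotency conditions into a polynomial identity in one real variable. Pick two distinct nonzero idempotents $c_1,c_2\in\ell$ and write a general point of $\ell$ as $c(t)=c_1+t(c_2-c_1)=(1-t)c_1+tc_2$, so that $c(0)=c_1$, $c(1)=c_2$. Set $v=c_2-c_1$ and expand $c(t)^2$ using bilinearity and commutativity: $c(t)^2=c_1^2+2t\,c_1v+t^2v^2=c_1+2t\,c_1v+t^2v^2$. The point $c(t)$ is an idempotent precisely when $c(t)^2=c(t)$, i.e.
\[
c_1+2t\,c_1v+t^2v^2=c_1+tv,
\]
which simplifies to the vector-valued quadratic equation $t^2v^2+t(2c_1v-v)=0$, valid in $A$. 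Evaluating at $t=1$ (where we know $c(1)=c_2$ is idempotent) gives $v^2+2c_1v-v=0$, hence $2c_1v-v=-v^2$, and substituting back the equation for a general idempotent parameter $t$ becomes $t^2v^2-tv^2=0$, i.e. $t(t-1)\,v^2=0$.

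Now the dichotomy falls out of whether the vector $v^2\in A$ is zero. If $v^2\ne 0$, then $t(t-1)=0$, so the only idempotents on $\ell$ are $c(0)=c_1$ and $c(1)=c_2$, which is case (b). If $v^2=0$, then $t(t-1)v^2=0$ holds identically, so \emph{every} point $c(t)$ with $t\in\field$ is an idempotent, i.e. $\ell\subset\Idm(A)$. In this case $v=c_2-c_1\in\Nil(A)$; more generally for any two points $c(t_1),c(t_2)\in\ell$ we have $c(t_1)-c(t_2)=(t_1-t_2)v$, whose square is $(t_1-t_2)^2 v^2=0$, so $c(t_1)-c(t_2)\in\Nil(A)$ as claimed. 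It remains to check the spectral statement $\half12\in\sigma(c)$ for every $c\in\ell$: from the idempotency equation at parameter $t$ we had $2c_1v-v=-v^2=0$, so $c_1 v=\half12 v$; by the symmetric computation $c_2 v=\half12 v$, and for a general $c=c(t)=(1-t)c_1+tc_2$ we get $c\,v=(1-t)c_1v+t\,c_2v=\half12 v$. Since $v\ne0$ (the two idempotents are distinct), $v$ is an eigenvector of $L_c$ with eigenvalue $\half12$, hence $\half12\in\sigma(c)$.

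The computation is essentially routine once the parametrization is set up; the one point that requires a little care is the logical structure of extracting the clean identity $t(t-1)v^2=0$. The naive expansion only gives $t^2v^2+t(2c_1v-v)=0$, with two a priori independent vector coefficients, and it is tempting but wrong to conclude $v^2=0$ and $2c_1v=v$ separately. The trick is to exploit the \emph{already known} idempotent $c_2$ (the hypothesis gives us two of them) to evaluate at $t=1$ and eliminate the linear coefficient in favor of $v^2$; this is what collapses the two cases into the single condition $v^2=0$ versus $v^2\ne 0$. If one only assumed a single idempotent on $\ell$ this argument would not close, which is consistent with the statement being about two idempotents lying on a line. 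A secondary, purely cosmetic point is that in case (a) one should note $c(t)$ is a \emph{nonzero} idempotent for every $t$ — it lies in $\Idm(A)$, not merely $\Idm_0(A)$ — which follows because if some $c(t_0)=0$ then $c_1=-t_0 v=-\frac{t_0}{1-t_0}(c_2-c_1)$ would force $c_1,c_2$ to be linearly dependent, contradicting the elementary observation (proved just before the proposition) that distinct nonzero idempotents are linearly independent.
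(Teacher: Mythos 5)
Your proof is correct and follows essentially the same route as the paper's: the identity $t(t-1)(c_2-c_1)^2=0$ you extract is exactly the paper's relation $\alpha(1-\alpha)(c_1+c_2-2c_1c_2)=0$, and the verification that $c\,(c_2-c_1)=\half12(c_2-c_1)$ for every $c\in\ell$ is the same computation. Your explicit check that no point of $\ell$ is the zero idempotent is a small extra care the paper omits, but the argument is otherwise identical.
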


\begin{proof}
Let $\ell$ be a line in $A$ and let $c_1,c_2\in \ell$, where $c_i\in \Idm(A)$. Suppose (b) does not hold, i.e. there exists another idempotent on the line: $c\in \ell\cap \Idm(A)$ and $c\ne c_1,c_2$. Then there exists $\alpha\in \R{}$, $\alpha(\alpha-1)\ne0$ such that
\begin{equation}\label{alpha}
(\alpha c_1+(1-\alpha) c_2)^2=\alpha c_1+(1-\alpha) c_2,
\end{equation}
therefore $\alpha(1-\alpha)(c_1+c_2-2c_1c_2)=0$, implying by the made assumption that $c_1+c_2=2c_1c_2$, or equivalently $(c_1-c_2)^2=0$, hence $c_1-c_2\in \Nil(A)$. It also follows that  \eqref{alpha} holds true for all $\alpha \in \field$, therefore $\ell\subset \Idm(A)$. Let $c=\alpha c_1+(1-\alpha) c_2\in \ell$. Then
\begin{align*}
c(c_1-c_2)&=\alpha c_1+(\alpha-1)c_2+(1-2\alpha)c_1c_2\\
&=\alpha c_1+(\alpha-1)c_2+\half12(1-2\alpha)(c_1+c_2)\\
&=\half12(c_1-c_2),
\end{align*}
therefore $c_1-c_2\ne0$ is an eigenvector corresponding to $\half12$. This proves that $\half12\in \sigma(c)$ for any $c\in \ell$. The proposition follows.
\end{proof}

%

\begin{lemma}[Square identity]
\label{lem:square}
Let $A$ be a commutative algebra over $\field$ and let $c_1,\ldots,c_{k+1}$ be idempotents in $A$ such that
\begin{equation}\label{alph0}
c_{k+1}=\sum_{i=1}^{k}\alpha_i c_i, \qquad
\sum_{i=1}^{k}\alpha_i=1.
\end{equation}
Then
\begin{equation}\label{inverse0}
\sum_{i<j}\alpha_i\alpha_j(c_i-c_j)^2=0.
\end{equation}
\end{lemma}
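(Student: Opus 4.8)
The plan is to start from the constraint $\sum_{i=1}^k\alpha_i=1$, which lets me write $c_{k+1}$ in two ways and compare. On one hand $c_{k+1}^2=c_{k+1}=\sum_i\alpha_i c_i$ since $c_{k+1}$ is idempotent. On the other hand, expanding the square of the linear combination gives $c_{k+1}^2=\sum_i\alpha_i^2 c_i + 2\sum_{i<j}\alpha_i\alpha_j\, c_ic_j$, using commutativity and $c_i^2=c_i$. So the identity to exploit is
\begin{equation*}
\sum_{i=1}^k\alpha_i c_i = \sum_{i=1}^k\alpha_i^2 c_i + 2\sum_{i<j}\alpha_i\alpha_j\, c_ic_j.
\end{equation*}

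The key algebraic step is to recognize that the left-hand side can be rewritten using $\alpha_i = \alpha_i\cdot 1 = \alpha_i\sum_j\alpha_j = \alpha_i^2 + \alpha_i\sum_{j\ne i}\alpha_j$, so that $\sum_i\alpha_i c_i = \sum_i\alpha_i^2 c_i + \sum_{i\ne j}\alpha_i\alpha_j c_i = \sum_i\alpha_i^2 c_i + \sum_{i<j}\alpha_i\alpha_j(c_i+c_j)$. Substituting this into the displayed identity, the $\sum_i\alpha_i^2 c_i$ terms cancel, leaving
\begin{equation*}
\sum_{i<j}\alpha_i\alpha_j(c_i+c_j) = 2\sum_{i<j}\alpha_i\alpha_j\, c_ic_j,
\end{equation*}
i.e. $\sum_{i<j}\alpha_i\alpha_j(c_i+c_j-2c_ic_j)=0$. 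Finally I observe that $c_i+c_j-2c_ic_j = c_i^2 - 2c_ic_j + c_j^2 = (c_i-c_j)^2$ because each $c_i$ is idempotent, which gives exactly \eqref{inverse0}.

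I do not anticipate a genuine obstacle here; the only point requiring care is the bookkeeping in the double sum when splitting $\sum_{i\ne j}$ into $i<j$ and $i>j$ parts and using commutativity $c_ic_j=c_jc_i$ to symmetrize (the pair $\{i,j\}$ contributes $\alpha_i\alpha_j c_i + \alpha_j\alpha_i c_j = \alpha_i\alpha_j(c_i+c_j)$). No genericity or reality hypothesis is needed — the statement holds over any field in which the manipulations make sense (in particular $2$ need not be invertible, since the factor $2$ appears only multiplicatively), so the proof is purely formal. I would present it as the short chain of equalities above, emphasizing the substitution $\alpha_i=\alpha_i\sum_j\alpha_j$ as the one non-obvious trick.
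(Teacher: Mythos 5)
Your proof is correct and is essentially identical to the paper's argument: both expand $c_{k+1}^2$ using idempotency, rewrite $\alpha_i$ via $\alpha_i(1-\alpha_i)=\sum_{j\ne i}\alpha_i\alpha_j$, and recognize $c_i+c_j-2c_ic_j=(c_i-c_j)^2$. No differences worth noting.
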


\begin{proof}
Using $c_i^2=c_i$, $1\le i\le k+1$ we obtain
\begin{equation}\label{suu}
\sum_{i=1}^{k}\alpha_i c_i=
\sum_{i=1}^{k}\alpha_i^2 c_i+
2\sum_{i<j}\alpha_i\alpha_j c_ic_j.
\end{equation}
Using the second identity in \eqref{alph0} yields
$$
\alpha_i(1-\alpha_i)=\sum_{j\ne i}\alpha_i\alpha_j,
$$
therefore applying the latter identity to \eqref{suu} readily yields
$$
0=\sum_{i<j}\alpha_i\alpha_j (2c_ic_j-c_i-c_j)
=\sum_{i<j}\alpha_i\alpha_j (c_i-c_j)^2.
$$
which proves \eqref{inverse0}.
\end{proof}

\begin{remark}The geometrical meaning of \eqref{alph0} is clear.
Note that \eqref{alph0} implies
$$
\sum_{i=1}^{k}\alpha_i (c_i-c_{k+1})=0,
$$
i.e. $\{c_1-c_{k+1}, \ldots, c_k-c_{k+1}\}$ lie in a $(k-1)$-dimensional subspace of $A$, equivalently, $c_{k+1}$ lies in the affine subspace $\Affin(c_1, \ldots, c_{k})$. Conversely, if  $c_{k+1}$ lies in the affine subspace $\Affin(c_1, \ldots, c_{k})$ then \eqref{alph0} holds. Then $\alpha_i$ can be thought of as a sort of barycentric coordinates (even if $c_i$ are linearly dependent).
\end{remark}

\begin{proposition}\label{pro:pi2}
Let $A$ be a generic algebra over $\field$. If $\Pi$ is a $2$-dimensional affine subspace of $A$ then there exists at most $4$ distinct idempotents in $\Pi$, i.e.
$$
\card (\Idm(A)\cap \Pi)\le 4.
$$
\end{proposition}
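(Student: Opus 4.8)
The plan is to use the Square identity (Lemma~\ref{lem:square}) together with the fact that $A$ is generic, i.e. $\half12\notin\sigma(A)$. Suppose, for contradiction, that $\Pi$ contains at least five distinct idempotents $c_1,\ldots,c_5$. Since $\Pi$ is $2$-dimensional, the five points $c_1,\ldots,c_5$ are affinely dependent; in fact any four of them already are. Fix $c_5$ and write $c_5=\sum_{i=1}^4\alpha_ic_i$ with $\sum_{i=1}^4\alpha_i=1$, where the $\alpha_i$ are barycentric coordinates of $c_5$ in $\Pi$ relative to $c_1,\ldots,c_4$. By Lemma~\ref{lem:square} applied to $c_1,\ldots,c_5$ we get the relation $\sum_{i<j}\alpha_i\alpha_j(c_i-c_j)^2=0$, an identity among the squared differences of the idempotents. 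The strategy is to produce enough such relations (by permuting which idempotent plays the role of ``$c_{k+1}$'' and by dropping down to triples) and then argue that they force some $(c_i-c_j)^2=0$, hence $c_i-c_j\in\Nil(A)$; by Proposition~\ref{pro:line} this makes the line through $c_i,c_j$ consist entirely of idempotents with $\half12$ in each Peirce spectrum, contradicting genericity.

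The key reduction is to triples. For any three distinct idempotents $c_i,c_j,c_k$ lying on a common line $\ell$, one of $\alpha$'s in the barycentric representation is such that the three-term square identity $\alpha\beta(c_i-c_j)^2+\alpha\gamma(c_i-c_k)^2+\beta\gamma(c_j-c_k)^2=0$ holds with $\alpha+\beta+\gamma=1$; but since all three lie on a line, $c_i-c_j$, $c_i-c_k$, $c_j-c_k$ are pairwise proportional, so this collapses to a single equation forcing $(c_i-c_j)^2=0$ — which is exactly how Proposition~\ref{pro:line} already shows three collinear idempotents are impossible in the generic case. So in $\Pi$ no three of $c_1,\ldots,c_5$ are collinear: the five points are in general position in the plane. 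Now I would look at the affine relations among them more carefully. With five points in general position in a $2$-plane, the space of affine dependencies $\sum_{i=1}^5\beta_ic_i=0$, $\sum\beta_i=0$, is $2$-dimensional. For each such dependency the same computation as in Lemma~\ref{lem:square} (writing any relation $\sum\beta_ic_i=0$, $\sum\beta_i=0$, as $c_m=\sum_{i\neq m}(-\beta_i/\beta_m)c_i$ whenever $\beta_m\neq0$) yields $\sum_{i<j}\beta_i\beta_j(c_i-c_j)^2=0$.

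Introduce the ten vectors $v_{ij}=(c_i-c_j)^2\in A$, $1\le i<j\le 5$. Each affine dependency $\beta$ gives a linear relation $\sum_{i<j}\beta_i\beta_j v_{ij}=0$ among the $v_{ij}$. The quadratic forms $\beta\mapsto\beta_i\beta_j$, as $\beta$ ranges over the $2$-dimensional space of dependencies, span a space of relations; the main computation is to check that these relations, together with the Plücker-type identities among the $\beta_i\beta_j$ coming from the fixed five-point configuration, force all $v_{ij}=0$ — or at least one of them. Concretely, I expect that picking two independent dependencies $\beta$ and $\beta'$ and forming suitable combinations $\beta_i\beta_j$, $\beta'_i\beta'_j$, $(\beta_i+t\beta'_i)(\beta_j+t\beta'_j)$ for varying $t$ produces, after elimination, an expression of the form $(\text{nonzero scalar})\cdot v_{ij}=0$ for some pair $i,j$. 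Since the configuration lies in $A$, not $A_\mathbb{C}$, the coefficients are real, but the argument is purely linear-algebraic over $\field$ and the genericity of the five-point configuration in the plane is what makes the relevant scalars nonzero. Once some $v_{ij}=0$ we conclude $c_i-c_j\in\Nil(A)$, the line through $c_i$ and $c_j$ lies in $\Idm(A)$ by Proposition~\ref{pro:line}, so $\half12\in\sigma(A)$, contradicting Proposition~\ref{pro:KT}.

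The main obstacle I anticipate is the bookkeeping in the last step: showing that the system of relations $\{\sum_{i<j}\beta_i\beta_j v_{ij}=0\}$, as $\beta$ ranges over all affine dependencies of five planar points in general position, actually has rank large enough to kill (at least one, ideally all) of the $v_{ij}$. This is a statement about the span of the products $\beta_i\beta_j$ inside the $\binom{5}{2}=10$-dimensional space as $\beta$ varies over a $2$-plane of dependencies — essentially the image of the Veronese of $\mathbb{P}^1$ inside $\mathbb{P}^9$, cut down by the constraint $\sum\beta_i=0$. I would handle this either by an explicit normalization of the five points (e.g. placing four of them at convenient positions and the fifth generic, then computing) or, more cleanly, by a dimension count showing the relation space has codimension $\le 1$ in the span of the $v_{ij}$, combined with the observation that a single surviving $v_{ij}$ already yields the contradiction. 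A secondary subtlety is the degenerate sub-cases where some $\alpha_i$ or $\beta_i$ vanish (i.e. $c_5$ lies on an edge of the quadrilateral $c_1c_2c_3c_4$); these are precisely the ``three collinear idempotents'' situations already excluded above, so they can be dispatched first by Proposition~\ref{pro:line}.
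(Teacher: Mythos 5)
Your setup is sound as far as it goes: the reduction to ``no three of the five idempotents are collinear'' via Proposition~\ref{pro:line}, and the relations $\sum_{i<j}\beta_i\beta_j(c_i-c_j)^2=0$ coming from affine dependencies, both match what the paper does. But the endgame you propose --- that these relations force some $(c_i-c_j)^2=0$ --- cannot work. Quantitatively, the space of affine dependencies of five planar points is $2$-dimensional, so the quadratics $\beta\mapsto\beta_i\beta_j$ span at most a $3$-dimensional space of relations among the ten vectors $v_{ij}$, nowhere near enough to annihilate any of them. More decisively, the paper's own examples of (non-generic) algebras with a whole circle of idempotents in a plane --- e.g.\ $x^2=(x_1^2-x_2x_3,\,x_2^2-x_1x_3,\,x_3^2-x_1x_2)$, whose real $2$-nilpotents form the line $x_1=x_2=x_3$, disjoint from the direction plane of the idempotent circle --- satisfy all of your relations for any five coplanar idempotents, yet no difference of two distinct real idempotents there is $2$-nilpotent. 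So no elimination among those relations can yield $(c_i-c_j)^2=0$; the contradiction with genericity must enter by another route.

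The route the paper takes is this. After disposing of the case $0\in\Pi$ (then $\Pi$ is a $2$-dimensional subalgebra with five idempotents counting $0$, hence non-generic), every triple $c_i,c_j,c_k$ is linearly independent, so one writes $c_4$ and $c_5$ in barycentric coordinates with respect to $c_1,c_2,c_3$; Lemma~\ref{lem:square} then gives \emph{two} independent linear relations among the three vectors $(c_1-c_2)^2,(c_1-c_3)^2,(c_2-c_3)^2$, forcing them to be pairwise proportional, say $(c_1-c_2)^2=\lambda(c_1-c_3)^2$. Expanding this proportionality (not a vanishing) shows that $(\lambda-1)c_1+c_2-\lambda c_3$ --- nonzero because $c_1,c_2,c_3$ are not collinear --- is an eigenvector of $L_{c_1}$ with eigenvalue $\half12$, so $\half12\in\sigma(A)$, contradicting Proposition~\ref{pro:KT}. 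If you want to salvage your argument, the target should be proportionality of the squared differences together with the observation that such a proportionality among affinely independent idempotents already produces a $\half12$-eigenvector; vanishing of a square is strictly stronger and does not follow from the syzygies.
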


\begin{proof}
Let us suppose by contradiction that there exist $5$ distinct  idempotents $c_i$, $1\le i\le 5$ in $\Pi$. If some of $c_i=0$ then $\Pi$ is a 2-dimensional subspace. Let, for example, $c_5=0$. Since any two nonzero idempotents non-collinear, $c_1,c_2$ is a basis in $\Pi$, hence
$$
c_3=\alpha c_1+\beta c_2,
$$
where $\alpha\beta\ne0$ because $c_3$ is distinct from $c_1,c_2$. This yields
$$
\alpha c_1+\beta c_2=\alpha^2c_1+\beta^2c_2+2\alpha\beta c_1c_2,
$$
hence
$$
c_1c_2=\frac{1}{2\alpha\beta }(\alpha(1-\alpha)c_1+\beta(1-\beta)c_2),
$$
i.e. $\Pi=\Span (c_1,c_2)$ is a subalgebra. Since $\Pi$ contains $5$ distinct idempotents (counting $0$) then the subalgebra $\Pi$ is non-generic, therefore $A$ is non-generic too, a contradiction. Thus, $\Pi\not\ni 0$.

Let $\{c_i,c_j,c_k\}$  be any triple with distinct indices $1\le i,j,k\le 5$. Since $A$ is generic, $c_i,c_j,c_k$ cannot lie on the same line. Also, by the above argument, the affine subspace $\Affin(c_i,c_j,c_k)$ does not contain the origin, thus $\{c_i,c_j,c_k\}$  are linearly independent. For dimension reasons, $\Pi$ is contained in the 3-dimensional subspace $V_{ijk}:=\Span(c_i,c_j,c_k)$ of $A$. In particular, we can write
\begin{align*}
c_4&=\alpha_1c_1+\alpha_2c_2+\alpha_3c_3,\\
c_5&=\beta_1c_1+\beta_2c_2+\beta_3c_3,
\end{align*}
where
$$
\sum_{i=1}^3\alpha_i=\sum_{i=1}^3\beta_i=1\quad \text{ and all $\alpha_i,\beta_i$ are nonzero}
$$
(where the latter condition follows from the fact that no pair of the idempotents can lie on the same line).
Applying Lemma~\ref{lem:square}, we find
\begin{equation}\label{split}
\begin{split}
\frac{1}{\alpha_1}(c_2-c_3)^2+\frac{1}{\alpha_2}(c_1-c_3)^2 +\frac{1}{\alpha_3}(c_1-c_2)^2&=0, \\
\frac{1}{\beta_1}(c_2-c_3)^2+\frac{1}{\beta_2}(c_1-c_3)^2 +\frac{1}{\beta_3}(c_1-c_2)^2&=0,
\end{split}
\end{equation}
Since $\{c_4,c_5,c_k\}$, $1\le k\le 3$, are linearly independent, we have
$$\left|
  \begin{array}{cc}
    \alpha_i & \alpha_j \\
    \beta_i & \beta_j \\
  \end{array}
\right|\ne0
$$
Then it follows from \eqref{split} that
$$
\frac{\alpha_i\beta_i}{\left|
  \begin{array}{cc}
    \alpha_k & \alpha_i \\
    \beta_k & \beta_i \\
  \end{array}
\right|}(c_k-c_i)^2=\frac{\alpha_j\beta_j}{\left|
  \begin{array}{cc}
    \alpha_k & \alpha_j \\
    \beta_k & \beta_j \\
  \end{array}
\right|}(c_k-c_j)^2, \qquad  \{i,j,k\}=\{1,2,3\}.
$$
Therefore $(c_2-c_3)^2, (c_2-c_1)^2$ and $(c_3-c_1)^2$ are collinear, i.e.
$$
(c_1-c_2)^2=\lambda(c_1-c_3)^2,\quad \lambda\in \field.
$$
Expanding the latter identity yields
$$
c_1u=\half12(1-\lambda)c_1+\half12 u, \qquad \text{where }u:=c_2-\lambda c_3,
$$
hence,
\begin{align*}
c_1((\lambda-1)c_1+u)&=(\lambda-1)c_1+\half12(1-\lambda)c_1+\half12 u\\
&=\half12((\lambda-1)c_1+u).
\end{align*}
This shows that $(\lambda-1)c_1+u$ is an eigenvector of $L_{c_1}$ with eigenvalue $\half12$, i.e. $\half12\in \sigma(A)$. This proves by Proposition~\ref{pro:KT} that $A$ is not generic, a contradiction.

\end{proof}

Note however that there are (necessarily non-generic) algebras with $5$ idempotents lying in a two-dimensional affine subspace $\Pi\subset A$. A typical situation is when these 5 idempotents lie on a quadric in $\Pi$, see two examples below.

\begin{example}  Consider an algebra
$A$ with the ground vector space $\mathbb{R}^3$ equipped by the following multiplication rule\footnote{Since all algebras are commutative, the multiplication structure is uniquely determined by the algebra square: $x\to x^2$. Namely, the multiplication then recovered by linearization $xy=\half12((x+y)^2-x^2-y^2)$}:
\begin{equation}
x^2=(x_1,x_2,x_3)^2=(x_1^2-x_2x_3,x_2^2-x_1x_3,x_3^2-x_1x_2).
\end{equation}

Then the set of a nonzero idempotents is the circle
$$
\Idm(A)=\{x_1+x_2+x_3=1,(x_1-\half13)^2+(x_2-\half13)^2+ (x_3-\half13)^2=\half23\}
$$
and the set of a nonzero 2-nilpotents is
$$\Nil(A)=\{x_1^3=x_2^3=x_3^3\}.
$$
In particular, there exist complex (non-real) 2-nilpotents in the complexification $A_\mathbb{C}$. Furthermore, it is straightforward to verify that the spectrum of any $c\in \Idm(A)$ is constant: $\sigma(c)=\{1,\half12,-\half12\}$.
\end{example}

\begin{example}
Let   $b(x,y)$ be a bilinear form on $\R{n}$, $n\ge 1$. Consider an algebra $A$ on $\R{}\times \R{n}$ with the following multiplication rule:
\begin{equation}
(x_0,x)\bullet (y_0,y)=(x_0y_0+b(x,y), \, x_0y+y_0x), \quad x_0,y_0\in\R{},\,\,\, x,y\in \R{n}.
\end{equation}
The algebra is obviously commutative. It is well-known that the algebra $A$ is Jordan, see \cite{FKbook}. It is called a spin-factor \cite{McCrbook} or a Jordan algebra of Clifford type.  Note that $e=(1,0)$ is the algebra unit. The set of nonzero idempotents is given by
$$
\Idm(A)=\{c=(x_0,x): x_0=\half12,\,\, b(x,x)=\half14\}\cup \{e\}
$$
and the set of a 2-nilpotents is $\Nil(A)=\{(x_0,x): x_0=b(x,x)=0\}$. The spectrum of any nonunital idempotent $c\ne e$ is $\sigma(c)=\{1,0,\half12,\ldots,\half12\}$. For example, let us consider $n=2$ and $b(x,y)=x_1y_1+x_2y_2$ the usual Euclidean inner product. Then in the corresponding Jordan algebra,   $\Idm(A)$ contains the circle $\{x_0=\half12,\,\, x_1^2+x_2^2=\half14\}$.
\end{example}

A further analysis of the proof of Propsition~\ref{pro:pi2} combined with Lemma~\ref{lem:square} makes it is plausible to believe that the following conjecture is true:

\begin{conj*}
If $\Pi$ is a $k$-dimensional affine subspace of a generic algebra  then at most $2^k$ distinct idempotents can be contained in $\Pi$.
\end{conj*}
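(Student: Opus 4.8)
The plan is to derive the bound from B\'ezout's theorem, by viewing the idempotents contained in $\Pi$ as part of a finite intersection of $n:=\dim A$ quadric hypersurfaces with a $k$-dimensional linear subspace of projective space. The cases $k=0$ (a point) and $k=1$ (Proposition~\ref{pro:line}) are already known, so I assume $k\ge2$; and since the idempotents contained in $\Pi$ are among those of $A_{\mathbb{C}}$ contained in the complexification $\Pi_{\mathbb{C}}$, it suffices to bound $\card(\Idm(A_{\mathbb{C}})\cap\Pi_{\mathbb{C}})$.

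First I would pass to projective space. Identifying $\mathbb{C}\times A_{\mathbb{C}}$ with $\mathbb{C}^{n+1}$, let $X\subset\mathbb{P}^{n}(\mathbb{C})$ be the projective scheme cut out by the $n$ quadratic forms $x^{2}-x_{0}x$, i.e.\ the homogenisation of the idempotency equation $x^{2}=x$ with respect to an auxiliary coordinate $x_{0}$: a point $[1:c]\in X$ is an idempotent or $0$, while $[0:v]\in X$ corresponds to a $2$-nilpotent $v$, a ``nilpotent at infinity''. Because $A$ is generic, $X$ is a \emph{finite} set --- this is exactly the B\'ezout dichotomy recalled in the Introduction, since a positive-dimensional $X$ would yield infinitely many idempotents once those at infinity are counted. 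Now let $L\cong\mathbb{P}^{k}$ be the projective closure of $\Pi_{\mathbb{C}}$ in $\mathbb{P}^{n}$: it is a $k$-dimensional linear subspace, the idempotents of $\Pi$ correspond to the points of $X\cap L$ with $x_{0}\ne0$, and $X\cap L$ is cut out inside $L$ by the $n$ restricted quadrics $\widehat{F}_{1},\dots,\widehat{F}_{n}$. Note that $X\cap L$ is finite (being contained in $X$) and that the $\widehat{F}_{j}$ cannot all vanish identically on $L$, for otherwise $L\subset X$ and $A$ would fail to be generic.

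Next I would reduce the number of equations from $n$ to $k$. Let $g_{1},\dots,g_{k}$ be generic $\mathbb{C}$-linear combinations of $\widehat{F}_{1},\dots,\widehat{F}_{n}$; then $X\cap L\subseteq V_{L}(g_{1},\dots,g_{k})$ and each $g_{i}$ is a genuine quadric on $L$. The base locus of the linear system spanned by the $\widehat{F}_{j}$ is exactly $X\cap L$, which is $0$-dimensional, so no positive-dimensional subvariety of $L$ is contained in it; a Bertini-type general-position argument (a generic member of a linear system contains a given subvariety only if that subvariety lies in the base locus) then shows, one quadric at a time, that $V_{L}(g_{1},\dots,g_{j})$ is pure of dimension $k-j$ for $j=1,\dots,k$. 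Hence $V_{L}(g_{1},\dots,g_{k})$ is a zero-dimensional complete intersection of $k$ quadrics in $\mathbb{P}^{k}$, of degree $2^{k}$ by B\'ezout, and therefore contains at most $2^{k}$ points. It follows that
$$
\card\bigl(\Idm(A)\cap\Pi\bigr) \le \card\bigl(X\cap L\bigr) \le \card V_{L}(g_{1},\dots,g_{k}) \le 2^{k},
$$
which for $k=2$ recovers Proposition~\ref{pro:pi2}.

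I expect two points to require genuine care. The first is the finiteness of $X$ --- equivalently, that both $\Idm(A_{\mathbb{C}})$ and $\Nil(A_{\mathbb{C}})$ are finite for a generic algebra; this is exactly where the genericity hypothesis is used, and it is precisely what breaks down for the Clifford and Jordan examples above. The second is the Bertini-type step for the generic combinations $g_{i}$: over $\mathbb{C}$ this is classical, but one must be careful to keep it consistent with the real and topological data that the rest of the paper is built on. An alternative, more elementary route is the one hinted at in the text: choosing an affine basis $c_{1},\dots,c_{k+1}$ from the idempotents of $\Pi$ and applying Lemma~\ref{lem:square} to each remaining idempotent produces a vector syzygy among the $\binom{k+1}{2}$ quantities $(c_{i}-c_{j})^{2}$. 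For $k\le3$, having $2^{k}+1$ idempotents yields enough such syzygies to force all the $(c_{i}-c_{j})^{2}$ to be collinear, after which a $\half12$-eigenvector is extracted exactly as in the proof of Proposition~\ref{pro:pi2}, contradicting genericity through Proposition~\ref{pro:KT}. The obstruction to running this argument for every $k$ is ensuring that sufficiently many of these syzygies are linearly independent --- which does not follow from the incidence pattern of the idempotents alone.
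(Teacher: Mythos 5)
The statement you are proving is presented in the paper only as a \emph{conjecture}: the authors give no proof, remarking merely that a further analysis of Proposition~\ref{pro:pi2} via Lemma~\ref{lem:square} makes it plausible. Your main (B\'ezout) argument is therefore not a variant of the paper's proof but an independent one, and in my reading it is correct and essentially complete. The two delicate points are exactly the ones you isolate. For the first --- finiteness of the projective zero locus $X$ of the $n$ quadrics $x^2-x_0x$ --- note that the formal definition of genericity in Section~3 only asserts that the \emph{affine} part of $X$ consists of $2^n$ distinct points, so you should add a sentence excluding a positive-dimensional component at infinity (a positive-dimensional family of $2$-nilpotents). This does follow: each of the $2^n$ idempotents is an isolated point of $X$ (a positive-dimensional component through an affine point would yield infinitely many idempotents), and the refined B\'ezout inequality $\sum_j \deg Z_j\le 2^n$ over the irreducible components $Z_j$ of $X$ is then already saturated by these $2^n$ reduced points, leaving no room for further components. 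With $X\cap L$ finite, your second ingredient --- cutting $L\cong\mathbb{P}^k$ by $k$ generic members of the linear system spanned by the restricted quadrics, using that no positive-dimensional variety can lie in the finite base locus $X\cap L$ --- is the standard general-position induction, and the resulting zero-dimensional complete intersection of $k$ quadrics in $\mathbb{P}^k$ indeed has at most $2^k$ points. The chain $\card(\Idm(A)\cap\Pi)\le\card(X\cap L)\le 2^k$ is then valid, and in fact bounds the idempotents of $\Pi$ together with $0$ and the nilpotent directions met by the closure of $\Pi$.

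Compared with the route the paper gestures at (syzygies among the elements $(c_i-c_j)^2$ produced by Lemma~\ref{lem:square}, culminating in a $\half12$-eigenvector and a contradiction with Proposition~\ref{pro:KT}), your argument is less elementary and leaves the real/topological framework of the paper, but it is uniform in $k$ and independent of the incidence combinatorics of the idempotents, which is precisely where the syzygy route stalls --- as you correctly observe, one cannot control a priori how many of the resulting relations are independent. Your parenthetical claim that the syzygy route succeeds for $k\le 3$ is not substantiated and should either be verified or dropped; it is not needed, since the B\'ezout argument already covers all $k$.
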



\section{Two-dimensional real generic algebras}
Though the two-dimensional nonassociative algebras is a very well-studied object, see for example H.~Petersson's paper \cite{Petersson00}, our   results below have a different nature.

\begin{definition}
A nonassociative algebra over $\R{}$ is called a \textit{real generic}  algebra if its complexification $A_{\mathbb{C}}$ is generic and additionally $\Idm(A)=\Idm(A_{\mathbb{C}})$.
\end{definition}

In this section we shall assume that $A$ is a real generic algebra and $\dim_{\R{}} A=2$. Let $c_0=0$ and $c_1,c_2,c_3$ denote  four  distinct  idempotents of $A$.

Then since $\dim A=2$ and $1\in\sigma(c_i)$ for $1\le i\le 3$, the second eigenvalue $\lambda_i$ is also real.  Let us associate to each $c_i$ the value of the characteristic polynomial of $L_{c_i}$ evaluated at $\half12:$
\begin{equation}\label{mmu}
\chi_{c_i}(\half12)=(\half12-1)(\half12-\lambda_i) =\half14(2\lambda_i-1)\in \R{},
\end{equation}
where $\sigma(c_i)=\{1,\lambda_i\}$, $1\le i\le3$. Since $A$ is generic, $\lambda_i\ne \half12$, hence $\chi_{c_i}(\half12)\ne 0$.

The sign of $\chi_{c_i}(\half12)$ has a clear topological interpretation. Let us consider the vector field
$$
\psi_A(x)=x^2-x: A\to A.
$$
Any zero $c$ of $\psi_A$ is an idempotent of $A$ and vice versa. If $A$ is generic then all zeros of $\psi_A$ are isolated. Recall that the index of $\psi_A$ at an isolated zero $c$ is the degree of the (normalized) mapping $\psi_A(x)$ nearby $c$ \cite{MilnorDT}.   Let $c\in \Idm(A)$ and let $z\in A$ then
$$
\half{1}{\epsilon}\psi_A(c+\epsilon z)=(2cz-z)+\epsilon z^2=2(L_c-\half12)z+\epsilon z^2.
$$
Since $A$ is generic,
\begin{equation}\label{chi1}
\half{1}{4\epsilon^2}\det d\psi_A(c)=\chi_c(\half12)=\det (L_c-\half12)\ne0,
\end{equation}
therefore, the index
\begin{equation}\label{chi2}
\mathrm{ind}_c\psi_A=\sgn \chi_c(\half12)
\end{equation}

By the Poincar\'{e}-Hopf theorem, the total index of the vector field is equal to the Euler characteristic of $A$, i.e. vanishes. In particular, this yields the index at infinity
$$
\mathrm{ind}_\infty\psi_A=-\sum_{i=1}^3\sgn \chi_c(\half12).
$$
Note that the latter sum is a topological invariant. This in particular means that $\mathrm{ind}_\infty\psi_A$ is invariant under small deformations of a generic algebra.

Our main goal  is to study possible configurations of the idempotent quadruple $(c_0,c_2,c_2,c_3)$ from topological point of view. To this end, note that there is a syzygy for the nontrivial part of the algebra spectrum established explicitly in \cite{KT2018a} (see also \cite{Walcher1}):
\begin{equation}\label{syzygies}
4\lambda_1\lambda_2\lambda_3- \lambda_1-\lambda_2-\lambda_3+1=0.
\end{equation}
Another way to present \eqref{syzygies} in a generic algebra is
\begin{equation}\label{syzygies0}
\sum_{i=1}^3\frac{-1}{4\chi_{c_i}(\half12)} =\frac{1}{1-2\lambda_1}+\frac{1}{1-2\lambda_2}+\frac{1}{1-2\lambda_3}=1,
\end{equation}
see (40) in \cite{KT2018a}.  Furthermore, the  idempotents also are balanced by the following `mean value' type identity:
\begin{equation}\label{baryc}
\sum_{i=1}^3\frac{c_i}{\chi_{c_i}(\half12)}=0.
\end{equation}
By virtue of \eqref{syzygies0} let us rewrite the latter identity as
\begin{equation}\label{baryc1}
\sum_{i=1}^3a_i c_i=0=c_0, \qquad
\sum_{i=1}^3a_i=1=:-a_0,
\end{equation}
where
\begin{equation}\label{aui}
a_i=\frac{-1}{4\chi_{c_i}(\half12)}=\frac{1}{1-2\lambda_i}.
\end{equation}
The vector of coefficients $a:=(a_1,a_2,a_3)\in\R{}$ can be thought as the barycentric coordinates of $c_0=0$ in the affine coordinate system $(c_1,c_2,c_3)$ in $A$. Note that the case $(-,-,-)$ is impossible by virtue of the second relation in \eqref{baryc1}. Therefore  there are only three possible \textit{unordered} sign configurations of $(a_1,a_2,a_3)$:
$$
\begin{array}{rccc}
\text{sign configurations:}\qquad &\vphantom{\int\limits_1^2}(+,+,+) &  (+,+,-) & \quad (+,-,-)\\
&\updownarrow &\updownarrow &\updownarrow \\
\text{types:}\qquad &\vphantom{\int\limits_1^2}(\mathrm{i})&(\mathrm{ii})&(\mathrm{iii})
\end{array}
$$
referred further to as types (i), (ii) and (iii) respectively. These types can be characterized geometrically, as follows: the three lines passing through nonzero idempotents $c_i$ divide the plane $A$  into $7=1+3+3$ disjoint open domains, see Fig.~\ref{fig:idpm2} below. It is straightforward to verify that the sign of the triple $(a_1,a_2,a_3)$ is constant in each domain, see Fig.~\ref{fig:idpm2}.

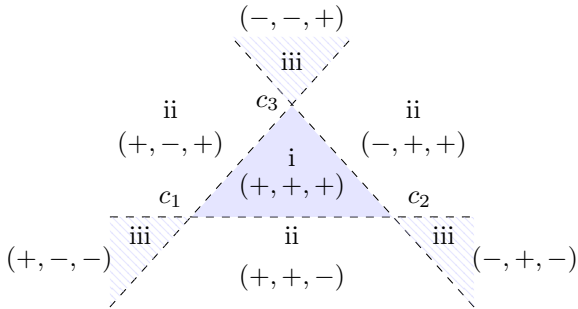
\begin{figure}[ht]
\begin{tikzpicture}[scale=0.08]

\draw[dashed] (-30,0)--(30,0);
\draw[dashed] (-30,-15)--(10,30);
\draw[dashed] (30,-15)--(-10,30);
\fill[pattern=north west lines, pattern color=blue!20!white,opacity=0.7] (10,30) -- (-10,30) -- (0,18.5) -- cycle;
\fill[pattern=north west lines, pattern color=blue!20!white,opacity=0.7] (-30,0) -- (-16.5,0) -- (-30,-15) -- cycle;
\fill[pattern=north west lines, pattern color=blue!20!white,opacity=0.7] (30,0) -- (16.5,0) -- (30,-15) -- cycle;
\fill[blue!20!white,opacity=0.5] (16.5,0) -- (-16.5,0) -- (0,18.5) -- cycle;
\draw (25,-3) node {iii};
\draw[right] (28,-7) node {$(-,+,-)$};
\draw (-25,-3) node {iii};
\draw[left] (-28,-7) node {$(+,-,-)$};
\draw (0,26) node {iii};
\draw[] (0,33) node {$(-,-,+)$};
\draw (0,10) node {i};
\draw (0,5) node {$(+,+,+)$};
\draw (0,-3) node {ii};
\draw (0,-10) node {$(+,+,-)$};
\draw (-20,18) node {ii};
\draw (-20,12) node {$(+,-,+)$};
\draw (20,18) node {ii};
\draw (20,12) node {$(-,+,+)$};
\draw[] (21,3) node {$c_2$};
\draw[] (-20,3) node {$c_1$};
\draw[] (-4,19) node {$c_3$};
\end{tikzpicture}
\caption{The sign of $(a_1,a_2,a_3)$} \label{fig:idpm2}
\end{figure}

\noindent
This yields the following three  \textit{unordered} configurations of sign:

\begin{table}[ht]
\begin{tabular}{cclcl}
$\{+,+,+\}$
&&type \textbf{ (i)} && $c_0$ is inside of the triangle $\Delta(c_1,c_2,c_3)$
\\
\\
$\{+,+,-\}$&&type \textbf{ (ii)} && $c_0$ is in an unbounded trapezius domain
\\
\\
$\{+,-,-\}$&&type \textbf{ (iii)} && $c_0$ is in an unbounded corner
\end{tabular}
\end{table}

\noindent
The sign of $\chi_{c_i}(\half12)$ is an important parameter here. Indeed, as it was remarked above, $\sgn \,\chi_{c_i}(\half12)$ is exactly the index of the vector field $\psi_A(x)=x^2-x$, thus carrying information on the topological `charge' of the corresponding idempotent $c_i$. Note that since $\chi_0(t)=t^2$, we have
$$
\sgn \chi_{c_0}(\half12)=+1\quad \Rightarrow \quad \sgn a_0=-1,
$$
i.e. the sign of the zero idempotent is always negative (the corresponding charge $a_0$ has the opposite sign, i.e. positive).
The three possible configurations of all four idempotents  with their charges are shown in Fig.~\ref{fig:idpm3}.

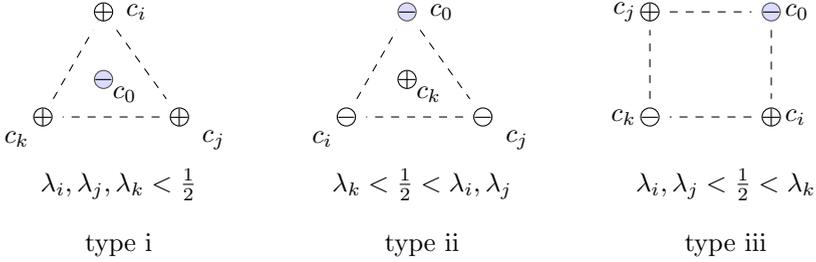
\begin{figure}[ht]
\begin{tikzpicture}[scale=0.1]
\node (ck) at (-47,9){$+$};
\draw (-47,9) circle (12mm);
\draw (-39,23) circle (12mm);
\node (ci) at (-39,23){$+$};
\draw (-29,9) circle (12mm);
\node (cj) at (-29,9){$+$};
\draw[fill=blue!20!white,opacity=0.7] (-39,14) circle (12mm);
\node (c0) at (-39,14){$-$};

\draw[right] (-39,12) node {$c_0$};
\draw[right] (-39,23) node {\, $c_i$};
\draw[right] (-29,6) node {\, $c_j$};
\draw[left] (-47,6) node {$c_k$\, };
\draw [dashed]  (ci)--(cj)--(ck)--(ci); 
\draw (-37,-8) node {type i};
\draw (-37,0) node {$\lambda_i, \lambda_j,\lambda_k<\half12$};

\draw (-47+40,9) circle (12mm);
\node (ak) at (-47+40,9){$-$};
\draw[fill=blue!20!white,opacity=0.7] (-39+40,23) circle (12mm);
\node (ai) at (-39+40,23){$-$};
\draw (-29+40,9) circle (12mm);
\node (aj) at (-29+40,9){$-$};
\draw (-39+40,14) circle (12mm);
\node (a0) at (-39+40,14){$+$};

\draw[right] (-39+40,12) node {$c_k$};
\draw[right] (-39+40,23) node {\, $c_0$};
\draw[right] (-29+40,6) node {\, $c_j$};
\draw[left] (-47+40,6) node {$c_i$\, };
\draw [dashed]  (ai)--(aj)--(ak)--(ai); 
\draw (-37+40,0) node {$\lambda_k<\half12<\lambda_i, \lambda_j$};
\draw (-37+40,-8) node {type ii};

\draw (13+20,9) circle (12mm);
\draw[left] (13+20,9) node {$c_k$\, };
\draw (13+20,23) circle (12mm);
\draw[left] (13+20,23) node {$c_j$\, };
\draw (29+20,9) circle (12mm);
\draw[right] (29+20,23) node {\,$c_0$};
\draw[fill=blue!20!white,opacity=0.7] (29+20,23) circle (12mm);
\draw[right] (29+20,9) node {\,$c_i$};

\draw (13+20,9) node(b0) {$-$};
\draw (13+20,23) node(b1) {$+$};
\draw (29+20,9) node(b2) {$+$};
\draw (29+20,23) node(b3) {$-$};
\draw [dashed]  (b0)--(b1)--(b3)--(b2)--(b0); 
\draw (23+20,0) node {$\lambda_i, \lambda_j<\half12<
\lambda_k$ };
\draw (23+20,-8) node {type iii};
\end{tikzpicture}
\caption{The three configuration types ($c_0$ is marked in grey)} \label{fig:idpm3}
\end{figure}

The three types can be described in terms of the Peirce spectrum as follows. To this end, note that it follows from   \eqref{syzygies} that
\begin{equation}\label{lambda}
\lambda_k-\frac12=
-\frac{(2\lambda_i-1)(2\lambda_j-1)}{4\lambda_i\lambda_j-1},
\end{equation}
where $i,j,k$ is a permutation of $1,2,3$. Therefore if  $\lambda_i,\lambda_j>\half12$ then $\lambda_k<\half12$. In that case, $a_i,a_j<0$ and $a_k>0$, therefore we have type (ii) configuration. Similarly, if $\lambda_i,\lambda_j<\half12$ then $\lambda_k>\half12$, hence $a_i,a_j>0$ and $a_k<0$, therefore we have type (iii) configuration.

Let us show that all three types of generic algebras are realizable. It is natural to ask if the condition \eqref{syzygies} is sufficient to have an algebra with the Peirce spectrum $(\lambda_1,\lambda_2,\lambda_3)$. The following proposition shows that this is the case.

\begin{theorem}
If $A$ is a generic two-dimensional commutative algebra over a field $\field$ then its Peirce spectrum satisfies \eqref{syzygies} and \eqref{baryc}. In the converse direction, given three numbers $\lambda_1,\lambda_2,\lambda_3\in \field$ satisfying \eqref{syzygies}, there exists a generic two-dimensional commutative algebra over  $\field$ such that $\sigma(c_i)=\{1,\lambda_i\}$, $i=1,2,3$, where $c_i$ are three distinct nonzero idempotents.
\end{theorem}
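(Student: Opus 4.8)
The forward direction is already done: equations \eqref{syzygies} and \eqref{baryc} were recalled above as syzygies valid in any generic algebra, so the plan is to concentrate entirely on the converse. The plan is to produce an \emph{explicit} algebra realizing a prescribed admissible triple $(\lambda_1,\lambda_2,\lambda_3)$. The natural normalization is to put the three sought idempotents at fixed affine positions; since any two nonzero idempotents are linearly independent (shown in the excerpt) and $c_0=0$ must satisfy \eqref{baryc1}, I would place $c_1,c_2,c_3$ at the vertices of a triangle whose barycentric coordinates for the origin are exactly $a_i=1/(1-2\lambda_i)$. Concretely, work in $A=\field^2$, choose $c_i=(\xi_i,\eta_i)$ with $\sum a_i c_i=0$ and $\sum a_i=1$, e.g. solve these linear conditions directly (this uses $\sum a_i=1$, which is precisely \eqref{syzygies0}, itself equivalent to \eqref{syzygies}). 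Because the $a_i$ need not all be positive, the placement is genuinely affine, not convex, but the linear system is still solvable as long as the $c_i$ are in general position, which one arranges by hand.

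Next I would reconstruct the multiplication. Since the algebra is commutative it is determined by the quadratic map $Q(x)=x^2$, and I want $Q(c_i)=c_i$ for $i=1,2,3$ together with the correct Peirce eigenvalue at each $c_i$. A clean way: the map $x\mapsto x^2-x$ must vanish at $c_0=0,c_1,c_2,c_3$; on $\field^2$ a quadratic vector field vanishing at $0$ has the form $x^2-x = B(x,x)$ componentwise minus $x$, and I would instead parametrize $x^2$ directly as a pair of quadratic forms $q_1(x),q_2(x)$ in the coordinates of $x$, giving $6$ free coefficients. Imposing $c_i^2=c_i$ for $i=1,2,3$ is $6$ linear equations in these $6$ unknowns. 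The resulting linear system, whose matrix is built from the (squares of the) coordinates of the $c_i$, is nonsingular precisely because $c_1,c_2,c_3$ are affinely independent and distinct from the origin, so there is a unique such algebra structure. One then must check that with this structure the Peirce spectrum at $c_i$ is $\{1,\lambda_i\}$: from \eqref{chi1} the second eigenvalue of $L_{c_i}$ is read off as $\det(L_{c_i})$, and a short computation identifies it via the barycentric identity $\sum a_j c_j=0$ — indeed differentiating $x\mapsto x^2-x$ at $c_i$ and using that $c_0$ and the other two idempotents are forced to be the remaining zeros pins down $\chi_{c_i}(\tfrac12)=-1/(4a_i)$, hence $\lambda_i=\tfrac12 - \tfrac{1}{4a_i}\cdot 2 = (1 - 1/a_i\cdot\tfrac12)\cdots$, which one rearranges to $a_i=1/(1-2\lambda_i)$ as required.

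Finally I must verify genericity: by Proposition~\ref{pro:KT} it suffices to show $\tfrac12\notin\sigma(A)$ and, for the real statement over $\C$, that there are no extra idempotents. Here the count is automatic: Bez\'out gives at most $2^2=4$ idempotents counted with multiplicity in $A_\C$ (including $0$), and we have exhibited $4$ distinct ones $c_0,c_1,c_2,c_3$, so there are exactly $4$, none at infinity, none repeated; thus $A$ is generic and, since $\tfrac12\notin\sigma(c_i)$ by the hypothesis $\lambda_i\neq\tfrac12$ (forced by \eqref{syzygies} together with genericity, or simply excluded in the statement), Proposition~\ref{pro:line}(a) cannot occur and the configuration is of one of the three types.

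The main obstacle I anticipate is \emph{degenerate placements}: for special admissible triples some $a_i$ may force two of the proposed idempotents to coincide or become collinear with the origin, collapsing the $6\times 6$ linear system. I would handle this by showing the admissible region for $(\lambda_1,\lambda_2,\lambda_3)$ on the surface \eqref{syzygies} always admits \emph{some} affine placement with $c_1,c_2,c_3,0$ in general position — equivalently that $a_1,a_2,a_3$ are finite (i.e. $\lambda_i\neq\tfrac12$, already assumed) and not in the forbidden pattern $(-,-,-)$ (ruled out by $\sum a_i=1$) — and that general position of the three points can then be chosen freely. A secondary, purely bookkeeping obstacle is checking that the spectrum computation really yields $\{1,\lambda_i\}$ rather than a permuted or rescaled pair; this is where one must be careful to use the normalization $c_i^2=c_i$ (which forces $1\in\sigma(c_i)$ with eigenvector $c_i$) and then read the second eigenvalue off the syzygy \eqref{baryc} exactly as in the derivation of \eqref{aui}.
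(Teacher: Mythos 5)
Your strategy is genuinely different from the paper's. The paper's converse is a direct construction: take a basis $c_1,c_2$, declare $c_1\circ c_1=c_1$, $c_2\circ c_2=c_2$, $c_1\circ c_2=\lambda_2c_1+\lambda_1c_2$ (so that $L_{c_i}$ is triangular and $\sigma(c_i)=\{1,\lambda_i\}$ for $i=1,2$ is immediate), exhibit $c_3=\alpha_2c_1+\alpha_1c_2$ with $\alpha_i=(2\lambda_i-1)/(4\lambda_1\lambda_2-1)$, verify $c_3^2=c_3$ by a two-line computation, and read off $\sigma(c_3)=\{1,\lambda_3\}$ from \eqref{lambda}. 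Two of the three eigenvalues are thus built into the structure constants and only one identity has to be checked. You instead prescribe the \emph{positions} of the idempotents via \eqref{baryc1} and recover the multiplication by interpolation; this is a legitimate inverse-problem route (and, when it works, produces the same algebra up to isomorphism, since all placements with the prescribed barycentric coordinates are $GL(2,\field)$-equivalent), but it shifts all the difficulty onto two steps that your sketch does not actually carry out.

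First, the crux — that the second Peirce eigenvalue at $c_i$ equals the prescribed $\lambda_i$ — is not proved: the displayed manipulation ``$\lambda_i=\tfrac12-\tfrac{1}{4a_i}\cdot 2=(1-1/a_i\cdot\tfrac12)\cdots$'' is not a derivation. The clean way to finish in your framework is: \emph{if} the constructed algebra is generic, then \eqref{baryc} and \eqref{syzygies0} say that the numbers $1/(1-2\mu_i)$ (with $\mu_i$ the true eigenvalues) are the barycentric coordinates of $0$ with respect to $(c_1,c_2,c_3)$; these are unique because the $c_i$ are affinely independent, and they were arranged to equal $1/(1-2\lambda_i)$, whence $\mu_i=\lambda_i$. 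But this makes everything hinge on genericity, and your genericity argument is the second gap: Bez\'out bounds the number of common zeros of the two conics $q_1(x)=x_1$, $q_2(x)=x_2$ only when they have no common component, so exhibiting four distinct idempotents does not by itself exclude a positive-dimensional idempotent variety. One must rule out a common factor directly: proportionality $q_1-x_1=\gamma(q_2-x_2)$ is impossible because the $q_i$ are homogeneous quadratic forms while $x_1-\gamma x_2$ has degree one, and a common linear factor would put at least three of the four idempotents on a line, contradicting the general position you can (and must) arrange. This short argument is missing. A minor further slip: the nonsingularity of your interpolation system is governed by \emph{pairwise linear independence} of the $c_i$ (its determinant is essentially $\prod_{i<j}(\xi_i\eta_j-\xi_j\eta_i)$), not by affine independence. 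With these three points repaired your proof would go through, but as written the key verifications are absent.
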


\begin{proof}
For the proof of the first part we refer to Theorem~4.7 in our recent paper \cite{KT2018a}. Thus, let us suppose that $\lambda_1,\lambda_2,\lambda_3\in \field$ satisfy \eqref{syzygies}. We consider a two-dimensional vector space $V$ over $\field$. Let $c_1,c_2$ be a basis of $V$. Turn $V$ into an algebra $A=(V,\circ)$ by setting
$$
c_1\circ c_1=c_1, \quad c_2\circ c_2=c_2, \qquad c_1\circ c_2=\lambda_2 c_1+\lambda_1c_2.
$$
Then $c_1,c_2$ are obviously idempotents in $A$, and it is straightforward to see that $\sigma(c_i)=\{1,\lambda_i\}$, $i=1,2$. Next, let
$$c_3=\alpha_2 c_1+ \alpha_1 c_2,
$$
where
$$
\alpha_i=\frac{2\lambda_i-1}{4\lambda_1\lambda_2-1}, \,\,i=1,2.
$$
Then using the definition of the multiplicative table on $A$ we obtain
\begin{align*}
c_3\circ c_3&=\alpha_2(\alpha_2+2\lambda_2\alpha_1)c_1+
\alpha_1(\alpha_1+2\lambda_1\alpha_2)c_2\\
&=\alpha_2 c_1+ \alpha_1 c_2\\
&=c_3.
\end{align*}
Hence $c_3$ is a nonzero idempotent  distinct of $c_1,c_2$. Furthermore, using \eqref{lambda} we obtain $\sigma(c_3)=\{1,\lambda_3\}$, hence $A$ is a generic algebra with the desired Peirce spectrum.
\end{proof}

Taking into account the latter theorem, we see that all three types are realizable. We give some explicit examples of each type.

\begin{example}[Types (i) and (ii)]\label{ex:typeib}
Let us define a nonassociative algebra $H(\tau)$ be an algebra on $\R{2}$ with multiplication
$$
(x_1,x_2)\circ (y_1,y_2)=(x_1y_1-x_2y_2,\half12(1-\tau^2)(x_1y_2+x_2y_1)),
$$
where $\tau>0$, $\tau\ne 1$, is some fixed real. Then
$$
\Idm(A)=\{c_0=0,\,c_1=(\half1{1-\tau^2},\,\half{-\tau}{1-\tau^2}),\, c_2=(\half1{1-\tau^2},\half{\tau}{1-\tau^2}),c_3=(1,0)\},
$$
and the nontrivial part of the spectrum is $$\lambda_1=\lambda_2=\frac{1+\tau^2}{2(1-\tau^2)}, \qquad \lambda_3=\half12(1-\tau^2)<\frac12.
$$
It follows that
\begin{itemize}
\item
if $0<\tau<1$ then $\sgn(a_1,a_2,a_3)=(-,-,+)$, hence $H(\tau)$ has type (ii);
\smallskip
\item
if $\tau>1$ then $\sgn(a_1,a_2,a_3)=(+,+,+)$, hence $H(\tau)$ has type (i).
\end{itemize}

\smallskip\smallskip

\end{example}

\begin{example}[Type (ii)]\label{ex:typeiii}
Let $A=\R{}\times \R{}$ be the direct product of two copies of reals $\R{}$. Then $\Idm(A)=\{c_0=0,c_1=(1,0),c_2=(0,1),c_3=(1,1)\}$, and $c_3$ is the algebra unit. In the above notation, the nontrivial part of the spectrum is $\lambda_1=\lambda_2=0$ and $\lambda_3=1$, hence using \eqref{aui} we obtain
$$
(a_0,a_1,a_2,a_3)=(-1,1,1,-1),
$$
therefore $A$ has type (iii). Note that the zero idempotent and the algebra unit have  negative sign, while the two adjacent idempotents $e_1=e_3-e_2$  have  positive sign.
\end{example}

We emphasize that any deformation (i.e. a continuous path in the variety of all nonassociative commutative algebras) of any generic algebra $A$ switching its  type must pass through non-generic algebras. \marginpar{......}

\section{Real generic algebras and Riccati type ODEs}

We finish this section by indicating some useful correspondence between the three algebra types and three possible phase portraits of quadratic systems of ODEs with two independent variables. Recall that given a system of quadratic ordinary differential equations
\begin{equation}\label{ODE1}
\frac{dX}{dt}=F(X),
\end{equation}
where $F:\R{n}\to \R{n}$ is a homogeneous  degree two map, one can associate a commutative  algebra $A_F$ on $\R{n}$ with multiplication
\begin{equation}\label{ODE2}
x\circ y=\half14(F(x+y)-F(x-y))
\end{equation}
(note that the right hand side is obviously bilinear in $x,y$).
In the introduced notation, \eqref{ODE1} becomes a Riccati type ODE
\begin{equation}\label{ODE3}
\dot{X}=X\circ X=X^2
\end{equation}
on the algebra $A_F$. This construction is well-known and there exists a nice correspondence between the standard algebraic concepts and ODE tools, we refer the interested reader to \cite{Markus}, \cite{Walcher94}, \cite{GrWalcher}, \cite{Krasnov09}, \cite{Krasnov13}.
For example, $c\in \Idm(A_F)$ if and only if the one-dimensional subspace spanned by $c$ is an integral curve of \eqref{ODE1} (i.e. the separate variable solution $X(t)=f(t)c$ is a solution to \eqref{ODE1}). In ODEs the idempotent is often called  a  Darboux point and the Peirce numbers are known as the Kovalevskaja exponents, \cite{Zhang}. There are many well-established quadratic ODEs, in particular, in mathematical biology or  dynamics of the so-called second order chemical reactions (i.e.
the reactions with a rate proportional to the concentration of the square of a single reactant or the product of the concentrations of two reactants) \cite{Aris}.

In what follows, we focus on the particular case $\dim A_F=n=2$. Recall that Berlinskii's Theorem \cite{Berlin60} classifies the  global behaviour of a quadratic ODE system in the plane, in particular, it describes all possible configurations of critical points. More precisely, if a plane ODE has four singular points in a finite part of the phase plane, then only one of the following cases is possible:
\begin{itemize}
\item[(B1)] three singular points are vertices of a triangle containing the fourth point inside and this point is a saddle while the others are antisaddles (a center, a focus or a node).

\item[(B2)] three singular points are vertices of a triangle containing the fourth point inside, and this point is a antisaddle while the others are saddles.
\item[(B3)] these points are vertices of a convex quadrangular, where two opposite vertices are saddles (antisaddles) and two others are antisaddles (saddles);

\end{itemize}

\begin{table}[th]
\begin{tabular}{ccccc}\renewcommand\arraystretch{1.5}
$\vphantom{\int\limits_{S^2}^{S^2}}F=(x^2-y^2,-2xy)$ && $F=(x^2-y^2,\frac23xy)$ && $sF=(x^2,y^2)$\\
$\vphantom{\int\limits_{S^2}^{S^2}}$(B1) && (B2) && (B3)\\

\includegraphics[height=3.3cm]{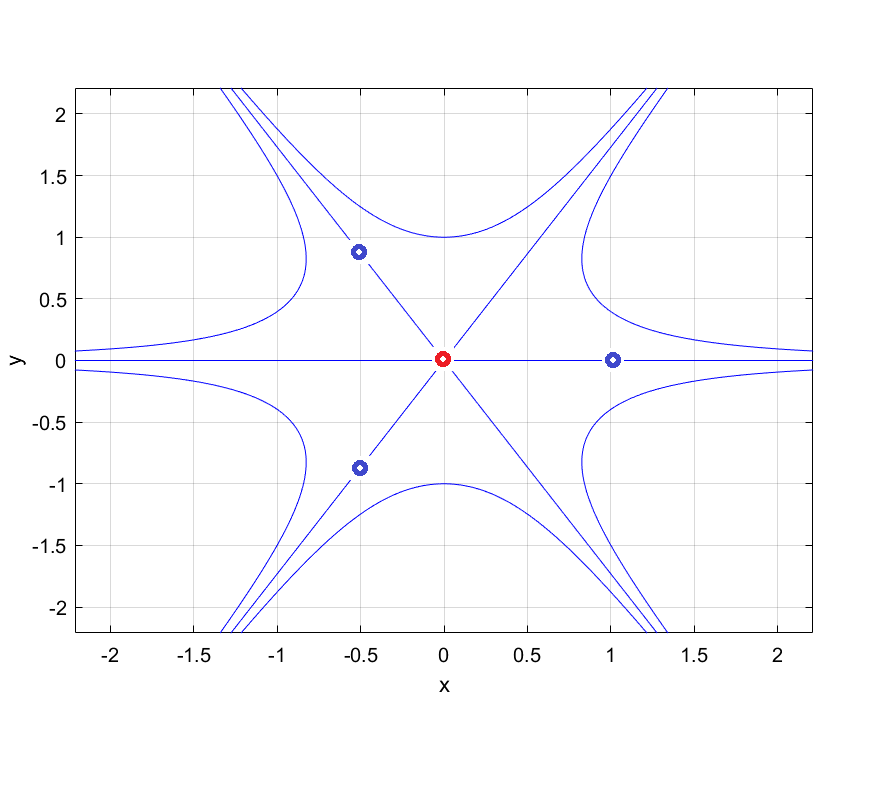}&&
\includegraphics[height=3.3cm]{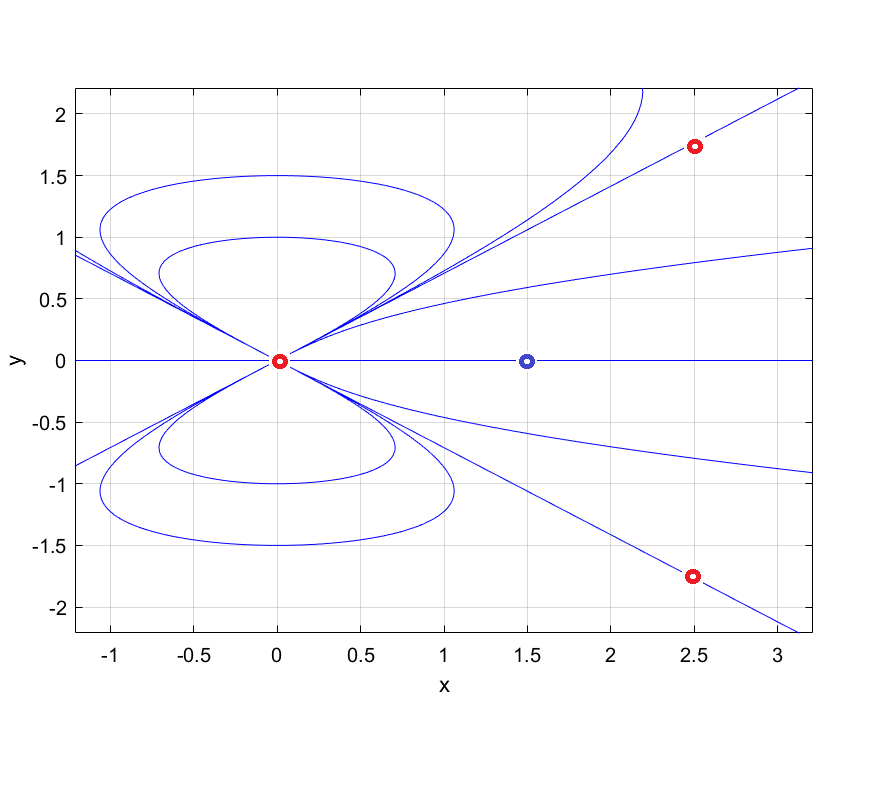}&&
\includegraphics[height=3.3cm]{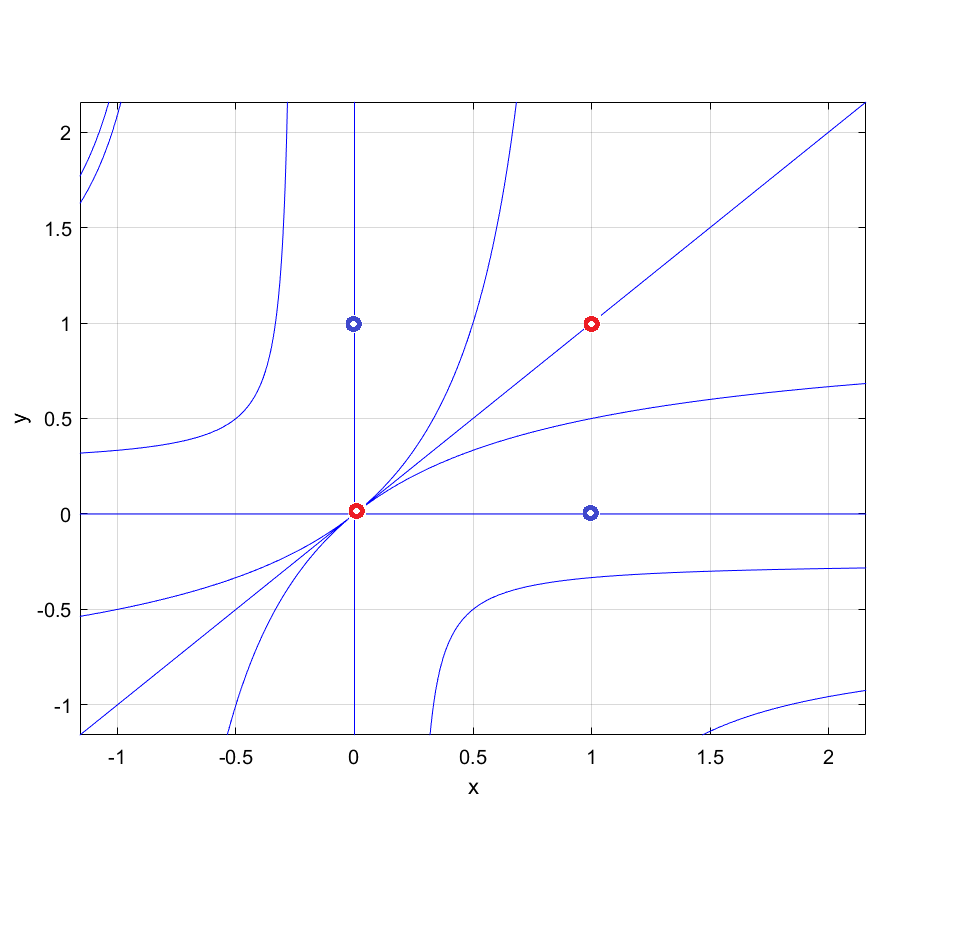}\\

type (i) && type (ii) && type (iii)\\
\\
$\vphantom{\int\limits_{S^2}}$$H(\sqrt{3})$ && $H(\half1{\sqrt{3}})$ && $\R{}\times \R{}$\\
\smallskip
\end{tabular}\label{tab1}
\caption{The ODE $\leftrightarrow$ real generic NA correspondence}
\end{table}

The reader easily recognize that these  cases correspond to exactly to the algebras of type (i), (ii) and (iii). Let us explain this in more detail in the case of the quadratic systems. To this end, let us consider \eqref{ODE3} in a two-dimensional real generic algebra $A$ with idempotents $\Idm(A)=\{c_0=0, c_1,c_2,c_3\}$. Then the rays
$$
\ell_{\pm}(c_i)=\{tc_i:t\in \R{\pm}\}, \quad i=1,2,3,
$$
split $A\cong \R{2}$ into 6 sector domains with vertices at the origin. Clearly, that each $\ell_{\pm}(c_i)$ is an integral curve of \eqref{ODE3}, thus each of the sector 6 domains is invariant under flow.  Then the three different types of plane quadratic ODEs corresponding to the three two-dimensional real generic algebra types are shown on Table~\ref{tab1}.

\section{Some final remarks and questions}

The case of real generic algebras of $\dim A\ge 3$ is more subtle.  It is interesting to know which configurations of idempotents are realizable and which geometrical concepts are naturally appear in this analysis. Recall that in the two-dimensional case considered in the previous section,   critical (non-generic) situations occur exactly when three idempotents fall on one line. The situation in three dimensions is more complicated and involves quadric surfaces instead. We are still able to get a nice description  here but this requires a more delicate work with the syzygies in the spirit of \cite{KT2018a}. Full details will appear elsewhere.

Note that any idempotent generates a one-dimensional subalgebra. Two-dimensional subalgebras of generic algebras appear naturally in the configurations considered above. This motivates a very natural question: what can be said about the possible number of subalgebras of a given generic algebra? For example, the direct algebra product $\R{}\times\R{}\times \R{}$ has $6$ two-dimensional subalgebras. Is there exists more than six subalgebras in dimension three? The latter example is a unital algebra. Is the unitality necessary for that?

\subsection*{Acknowledgment}
This work was done while the first author visited Link\"oping's University. He would like to thank the Mathematical Institution of Link\"oping's University for hospitality. The second author was partially supported by  G.S. Magnusons Foundation, grant MG 2017-0101.

\bibliographystyle{amsplain}

\def\cprime{$'$} \def\cprime{$'$}
\providecommand{\bysame}{\leavevmode\hbox to3em{\hrulefill}\thinspace}
\providecommand{\MR}{\relax\ifhmode\unskip\space\fi MR }
\providecommand{\MRhref}[2]{%
  \href{http://www.ams.org/mathscinet-getitem?mr=#1}{#2}
}
\providecommand{\href}[2]{#2}

\end{document}